\theoremstyle{plain}
\newtheorem{theorem}{Theorem}
\newtheorem{lemma}{Lemma}
\theoremstyle{definition}
\newtheorem{definition}{Definition}
\title{A class of non-cylindrical domains for parabolic equations}
\author{Alberto Dom\'{\i}nguez Corella}
\address{Departamento de Matem\'aticas, Centro de Investigaci\'on y Estudios Avanzados, Ciudad de M\'exico, M\'exico}
\email{adominguez@math.cinvestav.mx}
\author{Jorge Rivera Noriega}
\address{Centro de Investigaci\'on en Ciencias, Universidad Aut\'onoma del Estado de Morelos, Cuernavaca, Morelos, M\'exico}
\email{rnoriega@uaem.mx}
\subjclass[2010]{Primary 26B10, 26B35; Secondary 53A05}
\keywords{Implicit function theorem, Lip(1,1/2) functions, non-cylindrical domains}
\begin{document}
\begin{abstract}
	We present a class of non-cylindrical domains where Dirichlet-type problems for parabolic equations, such as the heat equation, can be posed and solved. The
regularity for the boundary of this class of domains is a mixed Lipschitz condition, as
described in the bulk of the paper. The main tool is an adequate version of the implicit
function theorem for functions with this kind of regularity. It is proved that the class
introduced herein is of the same type as domains previously considered by several
authors.
\end{abstract}
\maketitle

\vspace{-1.2em}
\begin{center}
	\small
	\textbf{Note.} This pre-print is the English translated version of the article:
	Dom\'inguez Corella, Alberto and Rivera Noriega, Jorge,
	\emph{A class of non-cylindrical domains for parabolic equations},
	\textit{Lecturas Matem\'aticas} \textbf{38} (2017), no.~2, 49--63,
	which was originally written and published in Spanish.
	The mathematical content is identical to the original.
\end{center}

\section{Motivation}

In several relatively recent works, Dirichlet-type problems associated with the heat
equation, or more generally with second order parabolic equations, have been considered
on non-cylindrical domains whose boundary satisfies a local Lipschitz $(1,1/2)$
regularity condition, to be described precisely below. An incomplete list of references,
ordered roughly chronologically, includes
\cite{Kemper1972,Brown1989,LewisMurray1992,LewisMurray1995,HofmannLewis1996,
HofmannLewis2005,HofmannLewis1999,Nystrom1997,BrownHuLieberman1997,
RiveraNoriega2003,HofmannLewisNystrom2004,LewisNystrom2007,Nystrom2008,
ArgiolasGrimaldi2010,RiveraNoriega2014,ChoDongKim2015,DindosPetermichlPipher}
and the monograph \cite{Lieberman1996}.

Our aim is to define a class of non-cylindrical domains in $\mathbb{R}^{n+1}$ on which
the main results in these references remain valid, and whose boundary admits a fairly
direct description in a precise sense.

\subsection*{Two classes of Lipschitz domains}

\begin{definition}
    An open bounded set $\Omega\subset\mathbb{R}^{n}$ is a star-like Lipschitz
domain (centered at the origin) if, in spherical coordinates on $\mathbb{R}^{n}$, it can
be written as
\[
\Omega = \bigl\{ r\,\omega : \omega\in S^{n-1},\ 0\le r < \varphi(\omega)\bigr\},
\]
where $S^{n-1}$ denotes the unit sphere in $\mathbb{R}^{n}$ and
$\varphi:S^{n-1}\to(0,\infty)$ is a Lipschitz function, that is, there exists a constant
$M>0$ such that
\[
|\varphi(x)-\varphi(y)| \le M\,|x-y|\quad\text{for }x,y\in S^{n-1}.
\]
\end{definition}
In this definition, and subsequently, $|x|$ denotes the Euclidean norm of
$x\in\mathbb{R}^{n}$; the context prevents confusion with the absolute value on
$\mathbb{R}$.

As an example, one may consider the domain in $\mathbb{R}^{2}$ whose boundary is
described, in polar coordinates, by
\[
\bigl\{ r\,\omega \in \mathbb{R}^{2} : r = \varphi(\omega) = 2+\cos\omega,\ 0\le \omega<2\pi \bigr\}.
\]
Observe that, since $\bar\Omega = \bigl\{ r\,\omega : \omega\in S^{n-1},\ 0\le r \le \varphi(\omega)\bigr\}$
and $\varphi$ is continuous, we have
\[
\partial\bar\Omega
= \bigl\{\varphi(\omega)\,\omega: \omega\in S^{n-1}\bigr\}
= \partial\Omega.
\]

\begin{definition}
    We say that an open set $\Omega\subset\mathbb{R}^{n}$ with
$\partial\Omega = \partial\bar\Omega$ is a Lipschitz domain if for each
$x_{0}\in\partial\Omega$ there is a new coordinate system obtained from the canonical one
by a composition of translations and rotations, together with a cylinder
$C = B\times I$ (where $B$ is a ball in $\mathbb{R}^{n-1}$ and $I$ is an interval) that
contains $x_{0}$, and a Lipschitz function $\psi:\mathbb{R}^{n-1}\to\mathbb{R}$ whose
graph $\Sigma(\psi,\mathbb{R}^{n-1})
=\bigl\{(x,\psi(x)) : x\in\mathbb{R}^{n-1}\bigr\}$ 
contains $x_{0}$, satisfies $\psi(B)\subset I$, and such that $C\cap\partial\Omega = C\cap\Sigma(\psi,\mathbb{R}^{n-1}).$
\end{definition}

This definition is taken from \cite{Wuertz2008}, but it already appears in earlier
references such as \cite{HuntWheeden1968,Dahlberg1979,Verchota1984}. See also the
survey \cite{Kenig1986} and the more recent work \cite{HofmannMitreaTaylor2007}.

The next result gives the connection between these two kinds of domains.

 \begin{theorem}
     If $\Omega$ is a star-like Lipschitz domain, then it is a Lipschitz domain.
 \end{theorem}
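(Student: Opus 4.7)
The plan is to work in a rotated coordinate system in which the radial direction $\omega_0:=x_0/|x_0|$ becomes the vertical axis $e_n$, and to exhibit $\partial\Omega$ near $x_0$ as the graph of a Lipschitz function over the horizontal hyperplane $\mathbb{R}^{n-1}\times\{0\}$. Since orthogonal changes of coordinates preserve Lipschitz regularity, I may assume from the outset that $\omega_0=e_n$, so that $x_0=\varphi(e_n)\,e_n$. I would then parametrize a neighborhood of $e_n$ on $S^{n-1}$ by its first $n-1$ components, writing $\omega(u):=(u,\sqrt{1-|u|^2})$ for $|u|<1/2$, and set
\[
T(u):=\varphi(\omega(u))\,u,\qquad H(u):=\varphi(\omega(u))\,\sqrt{1-|u|^2},
\]
so that boundary points close to $x_0$ are exactly $(T(u),H(u))$.

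The core step is showing that $T$ is bi-Lipschitz on a small ball about $0$. Lipschitz continuity of $T$ (and of $H$) is immediate from the Lipschitz regularity of $\varphi$, the smoothness of $\omega(\cdot)$ on $\{|u|<1/2\}$, and the boundedness of $\varphi$. For the reverse inequality, the decomposition
\[
T(u_1)-T(u_2)=\varphi(\omega(u_1))\,(u_1-u_2)+\bigl(\varphi(\omega(u_1))-\varphi(\omega(u_2))\bigr)\,u_2,
\]
together with the fact that $\varphi(\omega(u))\to\varphi(e_n)>0$ as $u\to 0$ and the Lipschitz bound on $\varphi\circ\omega$, yields $|T(u_1)-T(u_2)|\ge c\,|u_1-u_2|$ for $|u_1|,|u_2|$ sufficiently small. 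Hence $T^{-1}$ is Lipschitz on some ball $B\subset\mathbb{R}^{n-1}$, and the composition $\psi:=H\circ T^{-1}$ is the desired Lipschitz function on $B$. Choosing an interval $I$ broad enough to contain $\psi(B)$ but short enough that no directions $\omega\in S^{n-1}$ outside a small cap around $e_n$ contribute boundary points to $C:=B\times I$ (which is possible because $\varphi$ is bounded below by a positive constant and $\Omega$ is bounded) gives $C\cap\partial\Omega=C\cap\Sigma(\psi,\mathbb{R}^{n-1})$.

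The main obstacle is the quantitative lower bound on $T$: it rests on the strict positivity of $\varphi$, through which the leading term $\varphi(\omega(u_1))(u_1-u_2)$ dominates the error $(\varphi(\omega(u_1))-\varphi(\omega(u_2)))u_2$ whenever $u_2$ is close to the origin. An alternative in the spirit of the rest of the paper would be to apply a Lipschitz implicit function theorem to $G(y',y_n):=\sqrt{|y'|^2+y_n^2}-\varphi\bigl((y',y_n)/\sqrt{|y'|^2+y_n^2}\bigr)$ at the point $(0,\varphi(e_n))$, noting that $\partial_{y_n}G=1$ there in the classical sense; both approaches ultimately rely on the same positivity fact.
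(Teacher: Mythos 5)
Your main argument is correct, but it takes a genuinely different route from the paper's. The paper (following Wuertz's thesis, which it cites for Theorem~1) derives the result from the Lipschitz implicit function theorem, Theorem~2, applied to the auxiliary function $f(y)=\varphi^{2}(y/|y|)-|y|^{2}$; the detailed proof of the parabolic analogue, Theorem~6, follows exactly this pattern --- verify the Lipschitz and nondegeneracy hypotheses and let the IFT produce $\psi$. You instead parametrize $\partial\Omega$ near $x_{0}$ explicitly by $u\mapsto(T(u),H(u))$ and build $\psi=H\circ T^{-1}$ by showing $T$ is bi-Lipschitz, bypassing the implicit function theorem altogether. Both routes hinge on the same geometric fact --- the strict (and, by compactness of $S^{n-1}$, uniform) positivity of $\varphi$ makes the radial direction nondegenerate so that the Lipschitz error term is dominated --- and you correctly flag the IFT version as the one ``in the spirit of the rest of the paper''; that is indeed the paper's actual approach. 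What the IFT route buys is reusability: Theorems~4 and~5 transfer almost verbatim to the parabolic metric, which is the point of the paper. What your direct route buys is transparency and economy of statement. That said, it is not foundationally cheaper: inverting $T$ on a ball requires knowing that $T(B')$ is open, which silently invokes the same bi-Lipschitz inverse map theorem (Theorem~4, i.e.\ invariance of domain) that Theorem~5 also rests on, so the saving is organizational rather than substantive. Two small points worth tightening in your sketch: for the lower bound on $|T(u_{1})-T(u_{2})|$ use $\varphi(\omega(u_{1}))\ge\min_{S^{n-1}}\varphi>0$ rather than only continuity at $e_{n}$, which makes the estimate uniform and symmetric in $u_{1},u_{2}$; and the paper's definition of a Lipschitz domain asks for $\psi$ defined on all of $\mathbb{R}^{n-1}$, so one should extend the locally defined $\psi$ by a McShane--Kirszbraun extension, a cosmetic but nonempty step.
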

The proof of this theorem is given in \cite{Wuertz2008}, and is obtained by means of
the following version of the implicit function theorem, also proved there.

\begin{theorem}[Implicit function theorem for Lipschitz functions]
    Let $U_{m}\subset\mathbb{R}^{m}$ and $U_{n}\subset\mathbb{R}^{n}$ be open sets. For
$a\in U_{m}$ and $b\in U_{n}$ let
$f:U_{m}\times U_{n}\to\mathbb{R}^{n}$ be a function such that $f(a,b)=0$, and assume
that there exists $K_{1}>0$ such that
\begin{equation}\label{eq:Lip-f}
|f(x_{1},y_{1}) - f(x_{2},y_{2})|
\le K_{1}\,\bigl|(x_{1}-x_{2},\,y_{1}-y_{2})\bigr|
\end{equation}
for all $(x_{1},y_{1}),(x_{2},y_{2})\in U_{m}\times U_{n}$. Suppose in addition that
\begin{equation}\label{eq:nondeg-f}
|f(x,y_{1}) - f(x,y_{2})|
\ge K_{2}\,|y_{1}-y_{2}|
\end{equation}
for some $K_{2}>0$ and all $(x,y_{1}),(x,y_{2})\in U_{m}\times U_{n}$. Then there exists
an open set $V_{m}\subset\mathbb{R}^{m}$ such that $a\in V_{m}\subset U_{m}$ and a
unique function $\varphi:V_{m}\to U_{n}$ satisfying a Lipschitz condition of the form
\begin{equation}\label{eq:Lip-phi}
|\varphi(x_{1}) - \varphi(x_{2})|
\le K\,|x_{1}-x_{2}|\quad\text{for }x_{1},x_{2}\in V_{m},
\end{equation}
for some $K>0$ and
\[
\{(x,y)\in V_{m}\times U_{n} : f(x,y)=0\}
=
\{(x,\varphi(x)) : x\in V_{m}\}.
\]
\end{theorem}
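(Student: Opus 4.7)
The plan is to reformulate the problem in terms of the map $F:U_m\times U_n\to\mathbb{R}^m\times\mathbb{R}^n$ defined by $F(x,y)=(x,f(x,y))$. Under this change of point of view the zero set $\{f=0\}$ becomes $F^{-1}(\mathbb{R}^m\times\{0\})$, so once a local inverse of $F$ near $(a,b)$ is produced, the function $\varphi$ is read off directly.

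I would first dispose of uniqueness and of the Lipschitz estimate \eqref{eq:Lip-phi}, since both fall out quickly from the hypotheses. Uniqueness of $y$ for each fixed $x$ is immediate from \eqref{eq:nondeg-f}: if $f(x,y_1)=f(x,y_2)=0$ then $K_2|y_1-y_2|\le 0$. Once existence is granted, the Lipschitz bound follows by adding and subtracting $f(x_2,\varphi(x_1))$ in the identity $f(x_1,\varphi(x_1))-f(x_2,\varphi(x_2))=0$ and combining \eqref{eq:Lip-f} with \eqref{eq:nondeg-f}, which gives $|\varphi(x_1)-\varphi(x_2)|\le(K_1/K_2)|x_1-x_2|$.

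The bulk of the work is existence. I would check that $F$ is continuous and injective on $U_m\times U_n$: continuity follows from \eqref{eq:Lip-f}, and injectivity from the fact that $F(x_1,y_1)=F(x_2,y_2)$ forces $x_1=x_2$ and hence $y_1=y_2$ by \eqref{eq:nondeg-f}. A sharper argument combining \eqref{eq:nondeg-f}, \eqref{eq:Lip-f} and the triangle inequality via
\[
|f(x_1,y_1)-f(x_2,y_2)|\ge K_2|y_1-y_2|-K_1|x_1-x_2|,
\]
together with a short two-case analysis, in fact shows that $F$ is bi-Lipschitz, a fact that is not strictly necessary for existence but clarifies the geometry. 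Then I would invoke Brouwer's invariance of domain: $F$ is a continuous injective map between open subsets of $\mathbb{R}^{m+n}$, so $F(U_m\times U_n)$ is open and contains $F(a,b)=(a,0)$. I would then choose an open set $V_m\subset\mathbb{R}^m$ with $a\in V_m$ and $V_m\times\{0\}\subset F(U_m\times U_n)$, and for each $x\in V_m$ the unique $y\in U_n$ with $f(x,y)=0$ defines $\varphi(x)$.

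The main obstacle is exactly this existence step, which relies on the nontrivial topological input of invariance of domain. Alternative routes would be (i) Rademacher's theorem combined with the classical inverse function theorem at a point of differentiability of $F$, or (ii) topological degree theory applied to the homotopy $y\mapsto f(a+t(x-a),y)$; both are heavier, and invariance of domain yields the conclusion most directly given the bi-Lipschitz structure of $F$.
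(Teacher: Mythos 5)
Your proposal is correct, and it rests on the same skeleton the paper uses for the parabolic analogue (the paper itself does not prove Theorem~2 but refers to \cite{Wuertz2008}; the detailed argument it does give is for Theorems~4 and~5, so that is the right comparison). Both proofs pass to a graph map, show it is a bi-Lipschitz bijection onto an open set, and read off $\varphi$ from the inverse — but you deviate in three places, two of them to your advantage. First, the paper uses $g(x,y)=(x,\varepsilon f(x,y))$ and tunes the small parameter $\varepsilon$ so that a single chain of inequalities yields the lower Lipschitz bound for $g$; you work with $F(x,y)=(x,f(x,y))$ directly and obtain the same bi-Lipschitz conclusion from a short two-case argument. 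These are interchangeable. Second, you derive \eqref{eq:Lip-phi} directly by inserting $f(x_2,\varphi(x_1))$ into $f(x_1,\varphi(x_1))-f(x_2,\varphi(x_2))=0$, giving the transparent constant $K=K_1/K_2$; the paper routes the same estimate through the bi-Lipschitz constants of $g$ and $g^{-1}$, which is heavier. Third, and substantively, you correctly single out Brouwer's invariance of domain as the ingredient needed to conclude that $F(U_m\times U_n)$ is open in $\mathbb{R}^{m+n}$. The paper's Theorem~4 argues that the image is open because $f^{-1}$ is continuous and $(f^{-1})^{-1}(\dot U)=f(\dot U)$; but $f^{-1}$ is defined only on $f(\dot U)$, so that preimage is open merely in the subspace topology of $f(\dot U)$ — a tautology that does not give openness in $\dot{\mathbb{R}}^{m+1}$. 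Your invariance-of-domain step supplies exactly what is missing there. One caveat on your aside: the Rademacher-plus-classical-inverse-function-theorem alternative is less routine than it may look, since almost-everywhere differentiability gives no control at, or near, the specific base point $(a,b)$, and you would still need a further argument to propagate openness from the good points; invariance of domain (or degree theory, as you also note) remains the clean route.
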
\vspace{4pt}

\noindent
\textbf{General strategy of the present work}
\vspace{2pt}

By Theorem~1, the crucial local estimates leading to the solution of $L^{p}$ Dirichlet
problems for the Laplacian, which are known to hold on Lipschitz domains, also hold on
star-like Lipschitz domains. A general description of the harmonic measure method for
solving $L^{p}$ Dirichlet problems can be found in \cite[pp.~141--143]{Kenig1986}, and a
more detailed treatment appears, for instance, in
\cite{Dahlberg1977,Dahlberg1979,FeffermanKenigPipher1991}.

Recall that, as mentioned at the beginning of this section, part of our purpose is to
ensure that some results for parabolic equations, valid in a certain class of non-cylindrical
domains, also hold for the domains that we introduce in this paper.

We shall therefore try to describe, by means of cylindrical coordinates in
$\mathbb{R}^{n+1}$, a suitable analogue of star-like Lipschitz domains, and we shall prove
an analogue of Theorem~1 (see Theorem~6), which will show that the results in the
references mentioned above remain valid for our class of domains.

The details of these adaptations are carried out in the next section, and are based on
ideas from \cite{Wuertz2008}. In particular, the analogue of Theorem~1 will again be
proved using an appropriate version of the implicit function theorem (see Theorem~5).
\vspace{2pt}

\noindent
\textbf{Remarks on Theorem~2}\vspace{2pt}

It is worth emphasizing that this version of the implicit function theorem does not
require the functions involved to be of class $C^{1}$, as in the classical versions usually
presented in standard analysis textbooks (see, for example, \cite{Apostol1981}). Its
statement, however, retains certain essential features.

\begin{itemize}
    \item \textit{Regularity.} In this setting, both the function $f$ and the implicit function $\varphi$
satisfy Lipschitz-type conditions such as \eqref{eq:Lip-f} and \eqref{eq:Lip-phi}.

    \item \textit{Non-degeneracy.} Instead of assuming that a certain Jacobian determinant is non-zero,
one requires the property \eqref{eq:nondeg-f}.

    \item \textit{Uniqueness.} The implicit function $\varphi$ obtained is unique.
\end{itemize}
Moreover, this version of the implicit function theorem, in which regularity hypotheses
are relaxed, is by no means optimal. We refer the interested reader to the monograph
\cite{KrantzParks2013}, and to the more recent research article \cite{AzzamSchul2012} for
more refined results.

Even so, in the next section we shall have the opportunity to present the main ideas of
these proofs, when we adapt them to establish our main theorems.

\section{Basic definitions and main results}

Whenever the Euclidean space $\mathbb{R}^{n+1}$ or $\mathbb{R}^{n}$ contains the
variable $t$ (which, in problems related to the heat equation, plays the role of the time
variable), we shall equip it with a new metric that is compatible with a certain \textit{non–isotropic
dilation}, to be described below. With this new \textit{homogeneity} we shall also introduce an
adapted notion of Lipschitz–type functions together with the corresponding implicit function
theorem, and then define the new class of non–cylindrical domains.

\subsection{Parabolic homogeneity in $\mathbb{R}^{n+1}$}

We use Doob's notation (see \cite[Chapter~XV]{Doob2001}) for Euclidean
spaces that contain the time-variable:
\[
\dot{\mathbb{R}}^{n+1}=\{(t,x): t\in\mathbb{R},\ x\in\mathbb{R}^{n}\},\qquad
\dot{\mathbb{R}}^{n}=\{(t,x'): t\in\mathbb{R},\ x'\in\mathbb{R}^{n-1}\}.
\]
To simplify some subsequent statements, we shall occasionally use similar notation for
points or subsets of these spaces; for instance, we may explicitly write
$\dot U\subset\dot{\mathbb{R}}^{n+1}$ to emphasize that $U$ is a subset of an $(n+1)$–dimensional
Euclidean space that includes the time variable.

To motivate the use of a change of homogeneity in $\dot{\mathbb{R}}^{n+1}$, let
$u(t,x)$ be a solution of the heat equation $Hu=0$, where the heat operator is given by
\[
Hu(t,x)=\frac{\partial u}{\partial t}
-\sum_{j=1}^{n}\frac{\partial^{2}u}{\partial x_{j}^{2}},
\qquad x=(x_{1},\dots,x_{n}).
\]
Given $\lambda>0$ we consider
\[
v(t,x)=u(\lambda^{\alpha}t,\lambda^{\beta}x),
\]
and we ask for which values $\alpha,\beta>0$ the function $v$ is also a solution of
$Hv=0$. A straightforward computation shows that we must have $2\beta=\alpha$.

Observe that the value $\beta=1$ corresponds to the usual (isotropic) dilation in
$\mathbb{R}^{n}$, and that this choice naturally yields $\alpha=2$. With these values we
obtain a (non–isotropic) dilation in $\dot{\mathbb{R}}^{n+1}$ that induces a change of
homogeneity: given $\lambda>0$ we define
\begin{equation}\label{eq:parabolic-dilation}
T_{\lambda}(t,x)=(\lambda^{2}t,\lambda x)
=
\begin{pmatrix}
\lambda^{2} & \vec{0} \\
\vec{0} & \lambda \mathbb I_{n}
\end{pmatrix}
\begin{pmatrix}
t \\[1mm] x
\end{pmatrix},
\end{equation}
where $\mathbb I_{n}$ denotes the $n\times n$ identity matrix and $\vec{0}$ denotes the zero
row or column vector in $\mathbb{R}^{n}$. Note that the operator norm of the matrix in
\eqref{eq:parabolic-dilation}, regarded as a linear map from $\mathbb{R}^{n+1}$ into itself,
is
\[
\|T_{\lambda}\|=\max\{\lambda,\lambda^{2}\}.
\]

A way to equip $\dot{\mathbb{R}}^{n+1}$ with a metric adapted to this and more general
changes of homogeneity was introduced by B.~F.~Jones \cite{Jones1964} and
E.~Fabes and N.~Rivi\`ere \cite{FabesRiviere1966,Riviere1971} in their study of singular
integrals associated with the heat equation. We briefly recall some of those ideas, which
in the original references are developed for a more general family of dilations than the
$T_{\lambda}$ defined above.

First, note that one way to obtain the usual Euclidean norm of $x\in\mathbb{R}^{n}$,
$x\neq \vec 0$, is through the solution $r$ of the equation
\[
\sum_{j=1}^{n}\frac{x_{j}^{2}}{r^{2}}=1,
\qquad x=(x_{1},\dots,x_{n}).
\]
To introduce the change of homogeneity in $\dot{\mathbb{R}}^{n+1}$, given
$(t,x)\in\dot{\mathbb{R}}^{n+1}\setminus\{(0,0)\}$ we look for a solution $\rho$ of the equation
\[
\frac{t^{2}}{\rho^{4}}+\sum_{j=1}^{n}\frac{x_{j}^{2}}{\rho^{2}}=1,
\qquad x=(x_{1},\dots,x_{n}).
\]
To justify the existence of such solution, we consider the function $F:\bigl(\dot{\mathbb{R}}^{n+1}\setminus\{(0,\vec0)\}\bigr)\times(0,\infty)\to\mathbb{R}$
defined by
\[
F(t,x,\rho)=\frac{t^{2}}{\rho^{4}}+\sum_{j=1}^{n}\frac{x_{j}^{2}}{\rho^{2}}.
\]
Then $F$ is continuous and strictly decreasing in the variable $\rho$, and in addition
\begin{equation}\label{eq:F-limits}
\lim_{\rho\to\infty}F(t,x,\rho)=0,
\qquad
\lim_{\rho\to 0^{+}}F(t,x,\rho)=\infty.
\end{equation}
Consequently, for every $(t,x)\in\dot{\mathbb{R}}^{n+1}\setminus\{(0,\vec0)\}$ there exists a
unique $\rho=\rho(t,x)$ such that $F(t,x,\rho)=1$. Defining also $\rho(0,\vec0)=0$ we are
now in a position to define a metric on $\dot{\mathbb{R}}^{n+1}$.

We first observe that a direct consequence of this definition is that
\[
\rho\bigl(T_{\lambda}(t,x)\bigr)=\lambda\,\rho(t,x)
\quad\text{for all }(t,x)\in\mathbb{R}^{n+1},\ \lambda>0.
\]
Indeed, note that for all $x\in\mathbb{R}^{n}$, $t\in\mathbb{R}$, $\rho>0$ and $\lambda>0$
we have $F(\lambda^{2}t,\lambda x,\lambda\rho)=F(t,x,\rho)$. 
Fix $t\in\mathbb{R}$ and $x\in\mathbb{R}^{n}$, and let $\rho_{1}$ be the unique solution
of $F(t,x,\rho_{1})=1$. By the previous  identity we then have
$F(\lambda^{2}t,\lambda x,\lambda\rho_{1})=1$, which shows that $\rho_{2}=\lambda\rho_{1}$
is the solution of $F(T_{\lambda}(t,x),\rho_{2})=1$, that is,
$\rho(T_{\lambda}(t,x))=\lambda\rho(t,x)$, as claimed.

Another immediate consequence of the definition is that
\[
\bigl|T_{1/\lambda}(t,x)\bigr|^{2}=F(t,x,\lambda),
\qquad x\in\mathbb{R}^{n},\ t\in\mathbb{R},\ \lambda>0.
\]
Recall that $|\cdot|$ denotes the Euclidean norm in $\mathbb{R}^{n}$. In the previous  identity, and in what
follows, we also use $|\cdot|$ for the Euclidean norm in $\mathbb{R}^{n+1}$.

The next result confirms the intuition that $\rho$ behaves in many ways like a norm;
it can be found, in a more general setting, in \cite[Theorem~7.1]{Riviere1971}.

\begin{theorem}\label{thm:parabolic-metric}
The function
\[
D\bigl((t,x),(s,y)\bigr)=\rho(t-s,x-y)
\]
defines a metric on $\dot{\mathbb{R}}^{n+1}$.
\end{theorem}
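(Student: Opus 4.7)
The plan is to verify the three metric axioms of $D$ in turn, with only the triangle inequality requiring substantial work. Non-negativity and the identity of indiscernibles follow immediately from the construction of $\rho$: if $(t,x)\neq(s,y)$, then $(t-s,x-y)\neq(0,\vec 0)$, and the strict monotonicity of $F(t-s,x-y,\cdot)$ combined with the limits in \eqref{eq:F-limits} forces the unique solution $\rho(t-s,x-y)$ to be strictly positive, while $\rho(0,\vec 0)=0$ by convention. Symmetry is equally transparent: because $F(t,x,\rho)$ depends on $t$ and $x$ only through $t^{2}$ and $x_{i}^{2}$, one has $\rho(-t,-x)=\rho(t,x)$, whence $D((t,x),(s,y))=D((s,y),(t,x))$.

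The substantive step is the triangle inequality, which, after inserting an intermediate point $(r,z)$ and renaming, reduces to the subadditivity
\[
\rho(u_{1}+u_{2})\le\rho(u_{1})+\rho(u_{2}),\qquad u_{i}=(t_{i},x_{i})\in\dot{\mathbb{R}}^{n+1},
\]
for non-zero $u_{1},u_{2}$. Setting $a=\rho(u_{1})$, $b=\rho(u_{2})$ and $\theta=a/(a+b)\in(0,1)$, and using the strict monotonicity of $F(\cdot,\cdot,\rho)$ in $\rho$, it suffices to show $F(u_{1}+u_{2},a+b)\le 1$. Introducing the normalized coordinates $p_{i}=t_{i}/\rho(u_{i})^{2}$ and $q_{i}=x_{i}/\rho(u_{i})$, which satisfy $p_{i}^{2}+|q_{i}|^{2}=1$, and using that $a^{2}/(a+b)^{2}=\theta^{2}$ and $b^{2}/(a+b)^{2}=(1-\theta)^{2}$, the inequality becomes the purely algebraic statement
\[
\bigl(\theta^{2}p_{1}+(1-\theta)^{2}p_{2}\bigr)^{2}+\bigl|\theta q_{1}+(1-\theta)q_{2}\bigr|^{2}\le 1.
\]

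The main obstacle is this last inequality; everything else is routine bookkeeping. The spatial term is controlled by convexity of $|\cdot|^{2}$, yielding $|\theta q_{1}+(1-\theta)q_{2}|^{2}\le\theta|q_{1}|^{2}+(1-\theta)|q_{2}|^{2}$. The temporal term is more delicate because of the quadratic weights $\theta^{2},(1-\theta)^{2}$; here the plan is to apply a weighted Cauchy--Schwarz inequality,
\[
\bigl(\theta^{2}p_{1}+(1-\theta)^{2}p_{2}\bigr)^{2}\le\bigl(\theta^{3}+(1-\theta)^{3}\bigr)\bigl(\theta p_{1}^{2}+(1-\theta)p_{2}^{2}\bigr),
\]
and then to discard the first factor via the elementary bound $\theta^{3}+(1-\theta)^{3}\le\theta+(1-\theta)=1$ on $[0,1]$. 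Summing the two estimates produces
\[
\theta\bigl(p_{1}^{2}+|q_{1}|^{2}\bigr)+(1-\theta)\bigl(p_{2}^{2}+|q_{2}|^{2}\bigr)=\theta+(1-\theta)=1,
\]
which is the required bound. Unwinding the definitions of $p_{i},q_{i}$ via the homogeneity identity $\rho(T_{\lambda}w)=\lambda\,\rho(w)$ then closes the proof.
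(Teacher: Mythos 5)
Your proof is correct, but it takes a genuinely different route from the paper. Both proofs reduce the triangle inequality to the subadditivity $\rho(u_1+u_2)\le\rho(u_1)+\rho(u_2)$, and both exploit the monotonicity of $F$ in $\rho$ to recast this as showing $F(u_1+u_2,a+b)\le 1$. From there the arguments diverge. The paper's proof stays inside the dilation-group machinery: it rewrites $T_{1/(a+b)}(u_1+u_2)$ as a sum of two terms of the form $T_{\mu_i}\bigl(T_{1/\rho(u_i)}u_i\bigr)$ with $\mu_i=\rho(u_i)/(a+b)\in(0,1)$, applies the Euclidean triangle inequality, and then uses the contraction property $|T_\mu v|\le\mu|v|$ for $\mu\le 1$ together with the normalization $|T_{1/\rho(u_i)}u_i|=1$. (The paper in fact writes this contraction step as an equality, which is strictly false when $t\ne 0$ since $|T_\mu(t,x)|^2=\mu^4t^2+\mu^2|x|^2\ne\mu^2|(t,x)|^2$; the inequality is nonetheless valid and in the right direction, so the argument survives.) Your argument instead bypasses the $T_\lambda$ operators entirely: after introducing the normalized coordinates $p_i,q_i$, the claim becomes the purely algebraic inequality $(\theta^2p_1+(1-\theta)^2p_2)^2+|\theta q_1+(1-\theta)q_2|^2\le 1$ under the constraints $p_i^2+|q_i|^2=1$, which you settle by convexity of $|\cdot|^2$, a weighted Cauchy--Schwarz for the temporal part, and the elementary bound $\theta^3+(1-\theta)^3\le 1$. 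The paper's route is shorter once the dilation formalism is in place and generalizes cleanly to arbitrary mixed-homogeneity dilations $T_\lambda=\mathrm{diag}(\lambda^{a_1},\dots,\lambda^{a_{n+1}})$ with all $a_i\ge 1$ (this is essentially Rivi\`ere's general argument); your route is more elementary and self-contained, and makes visible exactly which algebraic inequality the exponents $(2,1)$ rest on, at the cost of being tailored to this specific homogeneity.
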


Before giving the proof, we point out that the same idea can be applied in the space
$\dot{\mathbb{R}}^{n}$ (with the time variable  still present as a distinguished coordinate), thereby turning it
into a metric space compatible with the parabolic homogeneity described above.

\begin{proof}[Proof of Theorem~\ref{thm:parabolic-metric}]
By definition, $D((t,x),(s,y))=0$ if and only if $t=s$ and $x=y$. To prove the
triangle inequality we need to show that
\[
\rho(t-\tau,x-z)\leq\rho(t-s,x-y)+\rho(s-\tau,y-z)
\]
for all $(t,x),(s,y),(\tau,z)\in\mathbb{R}^{n+1}$. It clearly suffices to prove that
\[
\rho(t+s,x+y)\leq\rho(t,x)+\rho(s,y)
\]
for all $(t,x),(s,y)\in\mathbb{R}^{n+1}$.

To this end, write $\lambda_{1}=\rho(t,x)$ and $\lambda_{2}=\rho(s,y)$. We claim that
\begin{equation}\label{eq:triangle-key}
\bigl|T_{1/(\lambda_{1}+\lambda_{2})}(t+s,x+y)\bigr|\leq 1
\end{equation}
implies $\lambda_{1}+\lambda_{2}\geq\rho(t+s,x+y)$, which is exactly the desired inequality. To prove the claim, recall that, by definition,
$F(t+s,x+y,\rho(t+s,x+y))=1$. Using this and the above observation about $F$, we can
rewrite \eqref{eq:triangle-key} as
\[
F\bigl(t+s,x+y,\lambda_{1}+\lambda_{2}\bigr)\leq 1=
F\bigl(t+s,x+y,\rho(t+s,x+y)\bigr).
\]
Since $F(t,x,\rho)$ is decreasing as a function of $\rho>0$, this yields
$\lambda_{1}+\lambda_{2}\geq\rho(t+s,x+y)$, as desired.

It remains to verify \eqref{eq:triangle-key}. We have
\begin{align*}
\bigl|T_{(\lambda_{1}+\lambda_{2})^{-1}}(t+s,x+y)\bigr|
&\leq
\bigl|T_{\lambda_{1}(\lambda_{1}+\lambda_{2})^{-1}}\bigl(T_{\lambda_1^{-1}}(t,x)\bigr)\bigr|
+\bigl|T_{\lambda_{2}(\lambda_{1}+\lambda_{2})^{-1}}\bigl(T_{\lambda_2^{-1}}(t,x)\bigr)\bigr| \\
&=
\frac{\lambda_{1}}{\lambda_{1}+\lambda_{2}}
\,\bigl|T_{\lambda_{1}^{-1}}(t,x)\bigr|
+\frac{\lambda_{2}}{\lambda_{1}+\lambda_{2}}
\,\bigl|T_{\lambda_{2}^{-1}}(s,y)\bigr|.
\end{align*}
By the choice of $\lambda_{1}$ and $\lambda_{2}$ and the definition of $\rho$ we have
$|T_{\lambda_{1}^{-1}}(t,x)|^{2}=F(t,x,\lambda_{1})=1$ and
$|T_{\lambda_{2}^{-1}}(s,y)|^{2}=F(s,y,\lambda_{2})=1$, hence
$|T_{\lambda_{1}^{-1}}(t,x)|=|T_{\lambda_{2}^{-1}}(s,y)|=1$. Therefore
\[
\bigl|T_{(\lambda_{1}+\lambda_{2})^{-1}}(t+s,x+y)\bigr|
\leq
\frac{\lambda_{1}}{\lambda_{1}+\lambda_{2}}
+\frac{\lambda_{2}}{\lambda_{1}+\lambda_{2}}
=1,
\]
which proves \eqref{eq:triangle-key} and completes the proof.
\end{proof}

From now on, motivated by Theorem~\ref{thm:parabolic-metric}, we shall use the
notation
\[
\|(t,x)\|=\rho(t,x),
\]
even though this quantity does not define a norm. It will be convenient, for notational
purposes, to refer to $\|(t,x)\|$ as a (parabolic) ``norm'' and to write it in this way, so as
to highlight the analogy with the situation described in Section~1.

Before ending this paragraph, let us note that a direct computation shows that there
exist constants $C_{0},C_{1}>0$ such that for all $(t,x)\in\mathbb{R}^{n+1}$ we have
\begin{equation}\label{eq:parabolic-comparable}
C_{0}\bigl(|t|^{1/2}+|x|\bigr)\leq\|(t,x)\|\leq
C_{1}\bigl(|t|^{1/2}+|x|\bigr).
\end{equation}
Here $|t|$ denotes the absolute value of $t\in\mathbb{R}$, and $|x|$ the Euclidean norm of
$x\in\mathbb{R}^{n}$. In the next subsection we shall use $|y|$ to denote the Euclidean
norm of $y\in\mathbb{R}^{m}$ for $m\in\mathbb N$.

\subsection{Lip$(1,1/2)$–type functions and their version of the implicit function theorem}

\begin{definition}
Let $\dot U\subset \dot{\mathbb R}^{n+1}$ be open. A function $f:\dot U\to{\mathbb R}^{m}$ is said to be of type $\mathrm{Lip}(1,1/2)$ in $\dot U$ if there exists a constant $M>0$ such that
\[
|f(t,x)-f(s,y)| \le M\bigl(|t-s|^{1/2}+|x-y|\bigr)
\]
for all $(t,x),(s,y)\in \dot U$.
\end{definition}

With the notation introduced above and the comparability estimate \eqref{eq:parabolic-comparable}, we have for such functions
\[
|f(t,x)-f(s,y)| \le M C_{0}^{-1}\,\|(t-s,x-y)\|,
\]
that is, $\mathrm{Lip}(1,1/2)$ functions are precisely those functions which are Lipschitz with respect to the parabolic ``norm'' $\|\cdot\|$.

\begin{theorem}\label{thm:bijection}
Let $\dot U\subset\dot{\mathbb R}^{m+1}$ be open and let $f:\dot U\to\dot{\mathbb R}^{m+1}$ be a mapping with $f(\dot U)\subset\dot{\mathbb R}^{m+1}$. Suppose there exist constants $M,K>0$ such that
\begin{itemize}
    \item[$(1)$] $\|f(\alpha)-f(\beta)\| \le M\|\alpha-\beta\|$ for all $\alpha,\beta\in \dot U$.

    \item[$(2)$] $\|f(\alpha)-f(\beta)\|\ge K\|\alpha-\beta\|$ for all $\alpha,\beta\in \dot U$.
\end{itemize}
Then $f$ is a bijection from $\dot U$ onto $f(\dot U)$, the set $f(\dot U)$ is open, and the inverse map $f^{-1}:f(\dot U)\to \dot U$ satisfies estimates of the same type as $(1)$-$(2)$.
\end{theorem}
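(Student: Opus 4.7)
My plan is to separate the conclusions: injectivity is a direct consequence of $(2)$, the estimates on $f^{-1}$ come from reading $(1)$ and $(2)$ backwards, and the real content of the theorem lies in showing that $f(\dot U)$ is open.

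I first verify injectivity and the inverse estimates. If $f(\alpha)=f(\beta)$, then $(2)$ gives $K\|\alpha-\beta\|\le \|f(\alpha)-f(\beta)\|=0$ and so $\alpha=\beta$; thus $f:\dot U\to f(\dot U)$ is a bijection and $f^{-1}:f(\dot U)\to\dot U$ is well defined. Writing $\gamma=f(\alpha)$ and $\delta=f(\beta)$, hypotheses $(1)$ and $(2)$ transform into
\[
M^{-1}\|\gamma-\delta\|\le\|f^{-1}(\gamma)-f^{-1}(\delta)\|\le K^{-1}\|\gamma-\delta\|,
\]
which are estimates of the same form as $(1)$--$(2)$, with constants $1/K$ and $1/M$.

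The heart of the proof is the openness of $f(\dot U)$. By the comparability estimate \eqref{eq:parabolic-comparable}, the parabolic ``norm'' $\|\cdot\|$ induces the Euclidean topology on $\dot{\mathbb R}^{m+1}$, so $\dot U$ is Euclidean-open and $f$ is continuous in the usual sense by $(1)$. Combined with the injectivity just established, and with the observation that source and target are open subsets of the same finite-dimensional Euclidean space, Brouwer's invariance of domain theorem applies and yields both that $f(\dot U)$ is open in $\dot{\mathbb R}^{m+1}$ and that $f$ is a homeomorphism onto its image. The main obstacle is precisely this step: a constructive alternative, say producing a preimage of $\beta$ near $\beta_0=f(\alpha_0)$ as a minimizer of $\alpha\mapsto\|f(\alpha)-\beta\|$ on a small closed parabolic ball $\bar B_\rho(\alpha_0,r)\subset\dot U$, uses $(2)$ on the parabolic sphere $\partial B_\rho(\alpha_0,r)$ to push the minimizer into the interior, but without a first-order condition it cannot conclude that the minimum value is zero. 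Invoking invariance of domain sidesteps this difficulty and relies only on the continuity and injectivity already in hand.
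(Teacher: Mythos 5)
Your argument for injectivity and for the two estimates on $f^{-1}$ is the same as the paper's, and both are correct. The interesting divergence is in the openness of $f(\dot U)$, and there you are actually on firmer ground than the paper. The paper concludes with ``since $f^{-1}$ is continuous on $f(\dot U)$, the inverse image $(f^{-1})^{-1}(\dot U)=f(\dot U)$ is open,'' but continuity of $f^{-1}:f(\dot U)\to\dot U$ only makes preimages of open sets open \emph{in the subspace topology of $f(\dot U)$}, which is trivially true and says nothing about openness in $\dot{\mathbb R}^{m+1}$. This is a genuine gap in the paper's proof. Openness of the image of an open set under a continuous injection between open subsets of $\mathbb R^{m+1}$ does require a non-elementary topological input, and Brouwer's invariance of domain, which you invoke, is exactly the right tool; your preliminary observation that the parabolic ``norm'' induces the Euclidean topology via \eqref{eq:parabolic-comparable} is the needed bridge to apply it. Your side remark that a direct minimization of $\alpha\mapsto\|f(\alpha)-\beta\|$ cannot close the argument without a first-order condition correctly diagnoses why there is no purely metric shortcut here. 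In short: your proof is correct, differs from the paper only in the openness step, and in that step it repairs a flaw in the paper's argument.
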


\begin{proof}
Clearly $f:\dot U\to f(\dot U)$ is surjective. To prove injectivity, let $\alpha,\beta\in \dot U$ with $f(\alpha)=f(\beta)$. Then
\[
0=\|f(\alpha)-f(\beta)\|\ge K\|\alpha-\beta\|,
\]
and hence $\alpha=\beta$.

Thus $f$ is a bijection from $\dot U$ onto $f(\dot U)$ and has an inverse map $f^{-1}:f(\dot U)\to \dot U$. For $\alpha',\beta'\in f(\dot U)$ there exist $\alpha,\beta\in \dot U$ such that $\alpha'=f(\alpha)$ and $\beta'=f(\beta)$. Then
\[
\|f^{-1}(\alpha')-f^{-1}(\beta')\|
=\|\alpha-\beta\|
\le K^{-1}\|f(\alpha)-f(\beta)\|
=K^{-1}\|\alpha'-\beta'\|,
\]
and similarly,
\[
\|f^{-1}(\alpha')-f^{-1}(\beta')\|
=\|\alpha-\beta\|
\ge M^{-1}\|f(\alpha)-f(\beta)\|
=M^{-1}\|\alpha'-\beta'\|.
\]
Hence $f^{-1}$ satisfies inequalities of the same type as $(1)$–$(2)$. Finally, since $f^{-1}$ is continuous on $f(\dot U)$, the inverse image $(f^{-1})^{-1}(\dot U)=f(\dot U)$ is open.
\end{proof}

We now state and prove the analogue of the implicit function theorem in this setting.
\begin{theorem}[Implicit function theorem]\label{thm:IFT-Lip}
Let $\dot{U}_{m+1}\subset\dot{\mathbb{R}}^{m+1}$ and $U_{n}\subset\mathbb{R}^{n}$ be open sets. For $a\in \dot{U}_{m+1}$ and $b\in U_{n}$ let $f:\dot{U}_{m+1}\times U_{n}\to\mathbb{R}^{n}$ be a function satisfying a $\mathrm{Lip}(1,1/2)$ condition of the form
\[
|f(t,x,y)-f(s,z,w)|
\le M\bigl(|x-z|+|t-s|^{1/2}+|y-w|\bigr),
\]
and such that $f(a,b)=\vec0$. Assume in addition that
\[
|f(t,x,y)-f(t,x,w)|\ge K\,|y-w|
\]
for some $K>0$ and all $(t,x)\in \dot{U}_{m+1}$, $y,w\in U_{n}$. Then there exists an open set $\dot{V}_{m+1}\subset\dot{\mathbb{R}}^{m+1}$ with $a\in \dot{V}_{m+1}\subset \dot{U}_{m+1}$ 
and a function $\varphi:\dot{V}_{m+1}\to U_{n}$ of type $\mathrm{Lip}(1,1/2)$, i.e.,
\begin{equation}\label{eq:phi-Lip}
|\varphi(t,x)-\varphi(s,z)|
\le M'\bigl(|x-z|+|t-s|^{1/2}\bigr),
\end{equation}
such that
\[
\bigl\{(t,x,y)\in \dot{V}_{m+1}\times U_{n}: f(t,x,y)=0\bigr\}
=
\bigl\{(t,x,\varphi(t,x)):(t,x)\in \dot{V}_{m+1}\bigr\}.
\]
\end{theorem}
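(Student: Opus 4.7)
The plan is to reduce the statement to the bijection theorem (Theorem~\ref{thm:bijection}) by ``straightening'' the zero set of $f$. Define
\[
F:\dot{U}_{m+1}\times U_{n}\longrightarrow\dot{\mathbb{R}}^{m+1}\times\mathbb{R}^{n},
\qquad F(t,x,y)=\bigl(t,x,f(t,x,y)\bigr),
\]
which preserves the $(t,x)$-coordinates, and regard source and target as open subsets of $\dot{\mathbb{R}}^{m+n+1}$ equipped with the parabolic ``norm'' $\|\cdot\|$. Since $F(a,b)=(a,\vec0)$, the desired graph of $\varphi$ should arise as $F^{-1}$ of the slice $\{y\text{-coordinate}=\vec0\}$ near this point.

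The upper $\mathrm{Lip}(1,1/2)$ bound for $F$ is immediate from the hypothesis on $f$ together with \eqref{eq:parabolic-comparable}, which reduces everything to estimates in terms of $|t-s|^{1/2}+|x-z|+|y-w|$. The main obstacle is the corresponding lower bound $\|F(\alpha)-F(\beta)\|\gtrsim \|\alpha-\beta\|$. Preservation of the $(t,x)$-coordinates already gives $\|F(\alpha)-F(\beta)\|\gtrsim |t-s|^{1/2}+|x-z|$. For the remaining $|y-w|$ contribution, I would insert the intermediate point $(t,x,w)$ and combine the nondegeneracy hypothesis with the upper bound:
\[
|f(t,x,y)-f(s,z,w)|\ge |f(t,x,y)-f(t,x,w)|-|f(t,x,w)-f(s,z,w)|\ge K|y-w|-M\bigl(|t-s|^{1/2}+|x-z|\bigr).
\]
Taking a suitable weighted sum of this with the previous estimate (with a large enough weight on the $(t,x)$ part to absorb the negative term) yields $\|F(\alpha)-F(\beta)\|\gtrsim |t-s|^{1/2}+|x-z|+|y-w|\asymp \|\alpha-\beta\|$.

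With both bounds in hand, Theorem~\ref{thm:bijection} applies (in ambient dimension $m+n+1$): $F$ is a bijection from $\dot{U}_{m+1}\times U_{n}$ onto the open set $F(\dot{U}_{m+1}\times U_{n})$, and $F^{-1}$ enjoys analogous two-sided parabolic Lipschitz bounds. Since $F(a,b)=(a,\vec0)$ lies in this open image, pick an open neighborhood $\dot{V}_{m+1}\subset\dot{U}_{m+1}$ of $a$ small enough that $\dot{V}_{m+1}\times\{\vec0\}\subset F(\dot{U}_{m+1}\times U_{n})$. Because $F$ preserves the first $m+1$ coordinates, $F^{-1}(t,x,\vec0)$ is necessarily of the form $(t,x,\varphi(t,x))$, and this defines the required $\varphi:\dot{V}_{m+1}\to U_{n}$.

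The $\mathrm{Lip}(1,1/2)$ estimate \eqref{eq:phi-Lip} is then inherited from the parabolic Lipschitz bound on $F^{-1}$, via
\[
|\varphi(t,x)-\varphi(s,z)|\le \bigl\|F^{-1}(t,x,\vec0)-F^{-1}(s,z,\vec0)\bigr\|\lesssim |t-s|^{1/2}+|x-z|,
\]
while the description of the zero set as the graph of $\varphi$ follows from the injectivity of $F$; equivalently, the uniqueness of $\varphi(t,x)$ among solutions $y\in U_{n}$ of $f(t,x,y)=0$ is a direct consequence of the nondegeneracy hypothesis. The only genuinely nontrivial step is the lower bound for $F$; everything else is bookkeeping once Theorem~\ref{thm:bijection} has been established.
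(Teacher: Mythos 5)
Your argument is correct and follows essentially the same path as the paper: reduce to Theorem~\ref{thm:bijection} via the map $(t,x,y)\mapsto(t,x,f(t,x,y))$, establish two-sided parabolic Lipschitz bounds, and read off $\varphi$ from the inverse. The only cosmetic difference is in how the $-M(|t-s|^{1/2}+|x-z|)$ term is absorbed in the lower bound: the paper rescales the last component by a small $\varepsilon$ (using $g(t,x,y)=(t,x,\varepsilon f(t,x,y))$ with $\varepsilon<1/M$), whereas you keep $F$ unscaled and take a weighted combination of the two lower bounds $\|F(\alpha)-F(\beta)\|\gtrsim|t-s|^{1/2}+|x-z|$ and $\|F(\alpha)-F(\beta)\|\gtrsim|f(\alpha)-f(\beta)|$ --- the same trick in different packaging.
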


\begin{proof}
Choose $\varepsilon>0$ to be specified later, and consider the mapping
\[
g:\dot{U}_{m+1}\times U_{n}\to \dot{\mathbb{R}}^{m+1}\times\mathbb{R}^{n}\cong\dot{\mathbb{R}}^{m+1+n},
\qquad
g(t,x,y)=(t,x,\varepsilon f(t,x,y)).
\]
For $(t,x,y),(s,z,w)\in \dot{U}_{m+1}\times U_{n}$ we have, by \eqref{eq:parabolic-comparable},
\begin{align*}
\|g(t,x,y)-g(s,z,w)\|
&=\|(t-s,x-z,\varepsilon(f(t,x,y)-f(s,z,w)))\| \\
&\le C_{1}\bigl(|t-s|^{1/2}+|x-z|+\varepsilon|f(t,x,y)-f(s,z,w)|\bigr) \\
&\le C_{1}\bigl(|t-s|^{1/2}+|x-z|+\varepsilon M(|x-z|+|t-s|^{1/2}+|y-w|)\bigr) \\
&\le C_{1}(1+\varepsilon M)\bigl(|t-s|^{1/2}+|x-z|+|y-w|\bigr) \\
&\le p C_0^{-1}\,\|(t,x,y)-(s,z,w)\|,
\end{align*}
with $p=C_{1}(1+\varepsilon M)$. Thus $g$ is Lipschitz on $\dot{U}_{m+1}\times U_{n}$.

On the other hand,
\begin{align*}
\|g(t,x,y)-g(s,z,w)\|
&\ge C_{0}\bigl(|t-s|^{1/2}+|x-z|+|\varepsilon(f(t,x,y)-f(s,z,w))|\bigr) \\
&\ge C_{0}\bigl(|t-s|^{1/2}+|x-z|\bigr)
+ C_{0}\varepsilon\bigl|f(t,x,y)-f(s,z,w)\bigr|.
\end{align*}
We write
\[
f(t,x,y)-f(s,z,w)
=
\bigl(f(t,x,y)-f(t,x,w)\bigr)
+\bigl(f(t,x,w)-f(s,z,w)\bigr),
\]
and use the triangle inequality to obtain
\begin{align*}
\|g(t,x,y)-g(s,z,w)\|
&\ge C_{0}\bigl(|t-s|^{1/2}+|x-z|\bigr)
+ C_{0}\varepsilon|f(t,x,y)-f(t,x,w)| \\
&\quad - C_{0}\varepsilon|f(t,x,w)-f(s,z,w)| \\
&\ge C_{0}\bigl(|t-s|^{1/2}+|x-z|\bigr)
+ C_{0}K\varepsilon|y-w|
- C_{0}\varepsilon M\bigl(|x-z|+|t-s|^{1/2}\bigr) \\
&= C_{0}(1-M\varepsilon)\bigl(|t-s|^{1/2}+|x-z|\bigr)
+ C_{0}K\varepsilon|y-w|.
\end{align*}
If we choose $0<\varepsilon<M^{-1}$ and set
\[
q=\min\{C_{0}(1-M\varepsilon),\,C_{0}K\varepsilon\}>0,
\]
then
\[
\|g(t,x,y)-g(s,z,w)\|
\ge q\bigl(|t-s|^{1/2}+|x-z|+|y-w|\bigr)
\ge C_{1}^{-1}q\,\|(t,x,y)-(s,z,w)\|.
\]
Thus $g$ satisfies assumptions of Theorem \ref{thm:bijection} on $\dot{U}_{m+1}\times U_{n}$. By Theorem~\ref{thm:bijection}, $g(\dot{U}_{m+1}\times U_{n})$ is open and $g:\dot{U}_{m+1}\times U_{n}\to g(\dot{U}_{m+1}\times U_{n})$ has a bi-Lipschitz inverse with respect to $\|\cdot\|$.

Consider now the canonical projections
\[
\pi_{m+1}:\dot{\mathbb{R}}^{m+1}\times\mathbb{R}^{n}\to\dot{\mathbb{R}}^{m+1},\qquad
\pi_{m+1}(t,x,y)=(t,x),
\]
\[
\pi_{n}:\dot{\mathbb{R}}^{m+1}\times\mathbb{R}^{n}\to\mathbb{R}^{n},\qquad
\pi_{n}(t,x,y)=y.
\]
Let $\dot{V}_{m+1}$ be a Euclidean ball (with respect to the usual Euclidean metric) such that
\[
a\in \dot{V}_{m+1}\subset \dot{U}_{m+1}
\quad\text{and}\quad
\pi_{n}(g^{-1}(t,x,0))\in U_{n}
\quad\text{for all }(t,x)\in \dot{V}_{m+1}.
\]
This is possible because $g^{-1}$ and $\pi_{n}$ are continuous and $\pi_{n}(g^{-1}(a,0))=b$. Define $\varphi:\dot{V}_{m+1}\to U_{n}$ by
\[
\varphi(t,x)=\pi_{n}(g^{-1}(t,x,0)).
\]
By construction $\varphi(a)=b$. Now, by definition of $g$ for $(t,x)\in \dot{V}_{m+1}$ we have
\begin{align*}
(t,x,0)
&= g(g^{-1}(t,x,0))
= g(\pi_{m+1}(g^{-1}(t,x,0)),\,\pi_n(g^{-1}(t,x,0))) \\
&= \bigl(\pi_{m+1}(g^{-1}(t,x,0)),\,\varepsilon f(\pi_{m+1}(g^{-1}(t,x,0)),\varphi(t,x))\bigr).
\end{align*}
Since $g$ is injective we conclude
\[
(t,x)=\pi_{m+1}(g^{-1}(t,x,0))
\quad\text{and}\quad
f(\pi_{m+1}(g^{-1}(t,x,0)),\varphi(t,x))=0,
\]
hence $f(t,x,\varphi(t,x))=0$ for $(t,x)\in V_{m+1}$, that is,
\[
\{(t,x,y)\in V_{m+1}\times U_n: f(t,x,y)=0\}\supset \{(t,x,\varphi(t,x)) : x\in V_{m+1}\}.
\]
Now, if $(t,x,y)\in \dot{V}_{m+1}\times U_n$ is such that $f(t,x,y)=0$, then
$g(t,x,y)=(t,x,0)$, which implies $(t,x,y)=g^{-1}(t,x,0)$; after projecting we obtain
\[
y=\pi_n(g^{-1}(t,x,0))=\varphi(t,x),
\]
which allows us to write
\[
\{(t,x,y)\in \dot{V}_{m+1}\times U_n : f(t,x,y)=0\}
\subset
\{(t,x,\varphi(x)) : x\in \dot{V}_{m+1}\}.
\]
To finish the proof, let us see that $\varphi$ satisfies the $\mathrm{Lip}(1,1/2)$ condition.
For $(t,x)\in\dot V_{m+1}$ we have
\[
g(t,x,\varphi(t,x))=(t,x,\varepsilon f(t,x,\varphi(t,x)))=(t,x,0).
\]
Hence, if $(t,x_1),(s,x_2)\in\dot V_{m+1}$, then using \eqref{eq:parabolic-comparable} and the lower bound for $g$ obtained above, we get
\begin{align*}
C_0\,|\varphi(t,x_1)-\varphi(s,x_2)|
&\le \|(t-s,\,x_1-x_2,\,\varphi(t,x_1)-\varphi(s,x_2))\| \\
&\le \frac{C_1}{q}\,\|g(t,x_1,\varphi(t,x_1))-g(s,x_2,\varphi(s,x_2))\| \\
&= \frac{C_1}{q}\,\|(t,x_1,0)-(s,x_2,0)\| \\
&\le \frac{C_1^2}{q}\,\bigl(|t-s|^{1/2}+|x_1-x_2|\bigr).
\end{align*}
Therefore
\[
|\varphi(t,x_1)-\varphi(s,x_2)|
\le \frac{C_1^2}{C_0\,q}\,\bigl(|t-s|^{1/2}+|x_1-x_2|\bigr),
\]
which completes the proof.

\end{proof}

\subsection{Non–cylindrical domains in $\mathbb{R}^{n+1}$}

We first introduce a variant of the notion of a domain locally given by Lip$(1,1/2)$ graphs. The following definition is close to that in \cite{BrownHuLieberman1997}, and also to the definitions used, for instance, in \cite{Brown1989,LewisMurray1992,LewisMurray1995,HofmannLewis1996,HofmannLewis2005,HofmannLewis1999,Nystrom1997,Nystrom2008,ArgiolasGrimaldi2010,RiveraNoriega2014,ChoDongKim2015,DindosPetermichlPipher}.

\begin{definition}
We say that an open connected set $\Omega\subset\mathbb{R}^{n+1}$ with $\partial\Omega=\partial\overline{\Omega}$ is a Lip$(1,1/2)$ domain if for each $X_{0}\in\partial\Omega$ there exists a new coordinate system obtained from the original one by a rotation in the spatial variables $x$, together with a cylinder $C$ of the form $B\times I$ (where $B$ is a ball in $\mathbb{R}^{n}$ and $I$ is an interval in $\mathbb{R}$) that contains $X_{0}$, and a function $\psi:B\to\mathbb{R}$ of type $\mathrm{Lip}(1,1/2)$ whose local graph
\[
\Sigma(\psi,B):=\{(t,x',\psi(t,x')):(t,x')\in B\}
\]
contains $X_{0}$, such that
\[
C\cap\partial\Omega = C\cap\Sigma(\psi,B).
\]
\end{definition}

In analogy with the definition of star-like Lipschitz domains in spherical coordinates, we now define non–cylindrical star-like domains using cylindrical coordinates. For $\mathbb{R}^{n+1}$ we use cylindrical coordinates of the form $(s,r\omega)$, where $s\in\mathbb{R}$ (the time variable), $r\ge 0$ and $\omega\in S^{n-1}$. 

\begin{definition}
An open set $\Omega\subset\mathbb{R}^{n+1}$ is said to be a non–cylindrical star-like domain of class Lip$(1,1/2)$ if $\Omega=\bigl\{(s,r\omega):\ \omega\in S^{n-1},\ 0\le r<\varphi(s,\omega),\ s\in\mathbb{R}\bigr\}$, 
where $S^{n-1}$ denotes the unit sphere in $\mathbb{R}^{n}$ and $\varphi:\mathbb{R}\times S^{n-1}\to (0,\infty)$ satisfies a $\mathrm{Lip}(1,1/2)$ condition of the form
\[
|\varphi(t_{1},\omega_{1})-\varphi(t_{2},\omega_{2})|
\le M\bigl(|t_{1}-t_{2}|^{1/2}+|\omega_{1}-\omega_{2}|\bigr)
\]
and, in addition, there exist constants $\delta_{0},K_{0}>0$ such that
\[
\delta_{0}<\varphi(s,\omega)<K_{0}
\quad\text{for all }(s,\omega)\in\mathbb{R}\times S^{n-1}.
\]
The latter condition ensures that $\Omega$ remains uniformly bounded, nondegenerate and star-like in the time variable $s$, properties that are desirable, for instance, when one wants to use caloric (parabolic) measure techniques (see e.g. \cite{Nystrom1997,RiveraNoriega2003}).
\end{definition}

Note also that if we fix the time variable $s$ and define
\[
\Omega(s)=\bigl\{(s,r\omega):\ \omega\in S^{n-1},\ 0\le r<\varphi(s,\omega)\bigr\},
\]
then, for each $s$, the set $\Omega(s)$ is a star-like Lipschitz domain in the sense of Section~1.

Since $\Omega
=\bigl\{(s,r\omega):\ \omega\in S^{n-1},\ 0\le r\le\varphi(s,\omega),\ s\in\mathbb{R}\bigr\}$, 
and $\varphi$ is continuous, we have
\[
\partial\overline{\Omega}=\partial\Omega
=\bigl\{(s,\omega\varphi(s,\omega)):\ \omega\in S^{n-1},\ s\in\mathbb{R}\bigr\}.
\]

We shall use the following elementary lemma.

\begin{lemma}\label{lem:spherical}
If $\omega_{1},\omega_{2}\in S^{n-1}$ and $r_{1},r_{2}\in(0,\infty)$, then
\[
|\omega_{1}-\omega_{2}|^{2}
= r_{1}^{-1}r_{2}^{-1}\bigl(|r_{1}\omega_{1}-r_{2}\omega_{2}|^{2}-|r_{1}-r_{2}|^{2}\bigr).
\]
\end{lemma}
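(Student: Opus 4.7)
The plan is to prove this by a direct expansion of the inner products, using only that $\omega_{1}, \omega_{2}$ lie on the unit sphere, i.e.\ $|\omega_{1}|=|\omega_{2}|=1$. There is no real obstacle here: the identity is essentially a rewriting of the polarization/law-of-cosines identity in $\mathbb{R}^{n}$, and the factor $r_{1}^{-1}r_{2}^{-1}$ appears precisely to cancel the $r_{1}r_{2}$ that arises from the cross term.

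Concretely, I would first expand
\[
|r_{1}\omega_{1}-r_{2}\omega_{2}|^{2}
= r_{1}^{2}|\omega_{1}|^{2} - 2 r_{1} r_{2}\,\omega_{1}\!\cdot\!\omega_{2} + r_{2}^{2}|\omega_{2}|^{2}
= r_{1}^{2} - 2 r_{1} r_{2}\,\omega_{1}\!\cdot\!\omega_{2} + r_{2}^{2},
\]
using $|\omega_{1}|=|\omega_{2}|=1$. Subtracting $|r_{1}-r_{2}|^{2}=r_{1}^{2}-2r_{1}r_{2}+r_{2}^{2}$ yields
\[
|r_{1}\omega_{1}-r_{2}\omega_{2}|^{2} - |r_{1}-r_{2}|^{2}
= 2 r_{1} r_{2}\bigl(1-\omega_{1}\!\cdot\!\omega_{2}\bigr).
\]

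Next, I would expand $|\omega_{1}-\omega_{2}|^{2} = 2 - 2\,\omega_{1}\!\cdot\!\omega_{2} = 2(1-\omega_{1}\!\cdot\!\omega_{2})$, again using that $\omega_{1},\omega_{2}$ are unit vectors. Multiplying the previous display by $r_{1}^{-1}r_{2}^{-1}$ (which is legitimate because $r_{1},r_{2}>0$) gives exactly $|\omega_{1}-\omega_{2}|^{2}$, and the identity follows. The only point that needs any care is the hypothesis $r_{1},r_{2}>0$, which ensures the reciprocals are defined; this is already built into the statement.
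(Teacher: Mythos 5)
Your proof is correct and follows essentially the same route as the paper: expand the three squared norms via inner products, use $|\omega_1|=|\omega_2|=1$, subtract, and identify $2r_1r_2(1-\omega_1\cdot\omega_2)$ with $r_1r_2|\omega_1-\omega_2|^2$. No meaningful differences.
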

\begin{proof}
Using the Euclidean inner product in $\mathbb{R}^{n}$ we have
\[
|r_{1}\omega_{1}-r_{2}\omega_{2}|^{2}
= r_{1}^{2}+r_{2}^{2}-2r_{1}r_{2}\,\omega_{1}\cdot\omega_{2},
\]
\[
|r_{1}-r_{2}|^{2}
= r_{1}^{2}+r_{2}^{2}-2r_{1}r_{2},
\]
\[
|\omega_{1}-\omega_{2}|^{2}
=2-2\omega_{1}\cdot\omega_{2}.
\]
Subtracting the second equality from the first gives
\[
|r_{1}\omega_{1}-r_{2}\omega_{2}|^{2}-|r_{1}-r_{2}|^{2}
=2r_{1}r_{2}\bigl(1-\omega_{1}\cdot\omega_{2}\bigr)
=r_{1}r_{2}\,|\omega_{1}-\omega_{2}|^{2},
\]
which is the desired identity.
\end{proof}

\begin{theorem}\label{thm:noncylindrical-Lip}
If $\Omega$ is a non–cylindrical star-like domain of class $\mathrm{Lip}(1,1/2)$, then $\Omega$ is a Lip$(1,1/2)$ domain.
\end{theorem}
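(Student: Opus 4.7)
The strategy mirrors the proof of Theorem~1: at each boundary point $X_0$, I will use the Lip$(1,1/2)$ implicit function theorem (Theorem~\ref{thm:IFT-Lip}) to exhibit $\partial\Omega$ locally as the graph of a Lip$(1,1/2)$ function of the time variable and of $n-1$ spatial variables. Fix $X_0=(s_0,r_0\omega_0)\in\partial\Omega$ with $r_0=\varphi(s_0,\omega_0)\in[\delta_0,K_0]$. After a rotation in the spatial variables I may assume $\omega_0=e_n$, so that $X_0=(s_0,0,r_0)$ under the splitting $x=(x',x_n)\in\mathbb{R}^{n-1}\times\mathbb{R}$. For $(t,x',x_n)$ with $x\neq 0$ define
\[
F(t,x',x_n)=\sqrt{|x'|^2+x_n^2}-\varphi\!\left(t,\frac{(x',x_n)}{\sqrt{|x'|^2+x_n^2}}\right).
\]
Then $F(X_0)=0$, and a point $(t,x)$ with $x\neq 0$ lies in $\partial\Omega$ exactly when $F(t,x)=0$ (and in $\Omega$ when $F(t,x)<0$). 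The plan is to solve $F=0$ for $x_n$ as a Lip$(1,1/2)$ function of $(t,x')$ via Theorem~\ref{thm:IFT-Lip}.

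The Lip$(1,1/2)$ upper bound on $F$ is routine: on a neighborhood in which $|x|$ is trapped in, say, $[\delta_0/2,2K_0]$, the map $x\mapsto|x|$ is $1$-Lipschitz and $x\mapsto x/|x|$ is Lipschitz with constant depending only on $\delta_0$, so composition with the Lip$(1,1/2)$ hypothesis on $\varphi$ delivers the required bound.

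The main obstacle is the non-degeneracy estimate $|F(t,x',x_n)-F(t,x',w_n)|\ge K|x_n-w_n|$; the intuition, borrowed from the smooth case where $\partial_{x_n}F(X_0)=1$, is that displacing $x_n$ changes the radial coordinate $|x|$ at nearly unit rate while moving the direction $x/|x|$ only to order $|x'|$. To make this quantitative I set $r_1=\sqrt{|x'|^2+x_n^2}$, $r_2=\sqrt{|x'|^2+w_n^2}$, $\omega_1=(x',x_n)/r_1$, $\omega_2=(x',w_n)/r_2$, and note the identity $r_1\omega_1-r_2\omega_2=(0,x_n-w_n)$. Lemma~\ref{lem:spherical} then gives
\[
|\omega_1-\omega_2|^2=\frac{|x_n-w_n|^2-(r_1-r_2)^2}{r_1 r_2}.
\]
Using $r_i-x_i=|x'|^2/(r_i+x_i)$ for $x_i\in\{x_n,w_n\}$, a short expansion near $(0,r_0)$ yields $|\omega_1-\omega_2|\le C|x'||x_n-w_n|/r_0^2$ and $|r_1-r_2|\ge(1-\varepsilon)|x_n-w_n|$. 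Restricting to $|x'|<\eta$ small enough that $CM\eta/r_0^2<1/4$, the decomposition
\[
F(t,x',x_n)-F(t,x',w_n)=(r_1-r_2)-\bigl(\varphi(t,\omega_1)-\varphi(t,\omega_2)\bigr)
\]
together with the Lip$(1,1/2)$ bound on $\varphi$ produces the desired lower bound with some $K>0$.

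With both hypotheses in place, Theorem~\ref{thm:IFT-Lip}, applied with the $(t,x')$-variables playing the role of $\dot{U}_{m+1}$ and an interval about $r_0$ playing the role of $U_n$, furnishes an open neighborhood $B$ of $(s_0,0)$ in $\dot{\mathbb{R}}^n$ and a Lip$(1,1/2)$ function $\psi\colon B\to\mathbb{R}$ with $\psi(s_0,0)=r_0$ whose graph coincides with $\{F=0\}$ over $B$. Choosing a cylinder $C=B\times I$ with $I\ni r_0$ small enough that $C$ remains in the annular region on which the estimates above are valid, and invoking the equivalence $\partial\Omega\cap C=\{F=0\}\cap C$, I conclude $C\cap\partial\Omega=C\cap\Sigma(\psi,B)$, which is the required local graph description.
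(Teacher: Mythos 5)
Your argument is correct and follows the same general template as the paper's proof — rotate coordinates so $X_0=(s_0,0,r_0)$, introduce a defining function whose zero set is $\partial\Omega$, verify the hypotheses of Theorem~\ref{thm:IFT-Lip}, and conclude — but it differs in two substantive ways. First, you use the defining function $F=|x|-\varphi(t,x/|x|)$, whereas the paper uses $f=\varphi^2(t,x/|x|)-|x|^2$. The squared version lets the paper extract the nondegeneracy directly from $\bigl|y_1^2-y_2^2\bigr|=(y_1+y_2)|y_1-y_2|\geq\lambda|y_1-y_2|$, while your unsquared version requires the extra (true, and easy) observation that $|r_1-r_2|\geq(1-\varepsilon)|x_n-w_n|$ when $|x'|$ is small. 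Second, and more interestingly, your estimate on the angular increment is obtained by applying Lemma~\ref{lem:spherical} a second time, this time with the clean identity $r_1\omega_1-r_2\omega_2=(0,x_n-w_n)$, which gives $|\omega_1-\omega_2|^2=\bigl(|x_n-w_n|^2-(r_1-r_2)^2\bigr)/(r_1 r_2)$ and then a factorable cancellation of order $|x'|$; the paper instead invokes the mean value theorem on $\ell\mapsto(x',\ell)/|(x',\ell)|$ and computes its partial derivatives, noting they vanish at $(0',\lambda)$. Your route is arguably cleaner, avoiding the mean value theorem and reusing the same algebraic lemma for both the Lipschitz upper bound and the nondegeneracy lower bound; the paper's route is more hands-on but relies on a differentiability argument that your version does not need. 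The sketchier steps in your write-up — the ``short expansion'' giving $|\omega_1-\omega_2|\leq C|x'||x_n-w_n|/r_0^2$ and $|r_1-r_2|\geq(1-\varepsilon)|x_n-w_n|$ — do check out (use $(x_n-w_n)-(r_1-r_2)=(x_n-w_n)\bigl((r_1-x_n)+(r_2-w_n)\bigr)/(r_1+r_2)$ together with $r_i-x_i=|x'|^2/(r_i+x_i)$), though in a final write-up you should spell them out.
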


\begin{proof}
Since $\Omega$ is a non–cylindrical star-like domain of class $\mathrm{Lip}(1,1/2)$, it has the form
\[
\Omega=\bigl\{(s,r\omega):\ \omega\in S^{n-1},\ 0\le r<\varphi(s,\omega),\ s\in\mathbb{R}\bigr\},
\]
as specified above. In what follows we shall also write $x=(t,x',x_{n})\in\dot{\mathbb{R}}^{n+1}$, where $x'\in\mathbb{R}^{n-1}$ and $x_{n},t\in\mathbb{R}$.

Let $\alpha\in\partial\Omega$. By applying a rotation around the $t$–axis and a translation parallel to the $t$–axis we may assume, without loss of generality, that
\[
\alpha\in\partial\Omega\cap\{(0,0',\lambda):\ \lambda\in(0,\infty)\},
\]
that is, $\alpha=(0,0',\lambda)$ with $\lambda=\varphi(e_{n+1})>0$.

Fix $\eta>0$ with $\eta\le \lambda/2$, subject to additional conditions to be specified later. Let $B$ be the ball in the $(t,x')$–variables, with center at $(0,0')$ and radius $\eta$, and let $I$ be the interval in the $x_{n}$–variable, centered at $\lambda$ and of radius $\eta$. Set $U=B\times I$ and define $f:U\to\mathbb{R}$ by
\[
f(t,x',x_{n})
=\varphi^{2}\!\left(t,\frac{(x',x_{n})}{|(x',x_{n})|}\right)
-\bigl|(x',x_{n})\bigr|^{2}.
\]
Note that
\[
f(0,0',\lambda)
=\varphi^{2}(e_{n+1})-\lambda^{2}=0.
\]
To check that $f$ is of type $\mathrm{Lip}(1,1/2)$, let
\[
x=(t,x',x_{n}),\quad
y=(s,y',y_{n})\in U,
\]
and apply Lemma~\ref{lem:spherical} with
\[
\omega_{1}=\frac{x}{|x|},\quad \omega_{2}=\frac{y}{|y|},\quad
r_{1}=|x|,\quad r_{2}=|y|.
\]
We obtain
\[
\left|\frac{x}{|x|}-\frac{y}{|y|}\right|
=\Bigl(|x|^{-1}|y|^{-1}\bigl(|x-y|^{2}-\bigl||x|-|y|\bigr|^{2}\bigr)\Bigr)^{1/2}
\le \frac{2}{\lambda}\,|x-y|,
\]
since, by construction, $|x|\ge |x_{n}|\ge \lambda/2$ and similarly for $y$ if $\eta\le \lambda/2$.
Let $M>0$ be a Lip$(1,1/2)$ constant for $\varphi$. Then
\begin{align*}
|f(t,x)-f(s,y)|
&=\Bigl|\varphi^{2}\!\left(t,\frac{x}{|x|}\right)-\varphi^{2}\!\left(s,\frac{y}{|y|}\right)
+\bigl(|y|^{2}-|x|^{2}\bigr)\Bigr| \\
&\le \Bigl|\varphi^{2}\!\left(t,\frac{x}{|x|}\right)-\varphi^{2}\!\left(s,\frac{y}{|y|}\right)\Bigr|
+\bigl||x|^{2}-|y|^{2}\bigr|.
\end{align*}
Using the Lipschitz regularity of $\varphi$ and the above bound we get
\[
\Bigl|\varphi^{2}\!\left(t,\frac{x}{|x|}\right)-\varphi^{2}\!\left(s,\frac{y}{|y|}\right)\Bigr|
\le 2\|\varphi\|_{\infty}M\bigl(|t-s|^{1/2}+|x/|x|-y/|y||\bigr)
\le 2\|\varphi\|_{\infty}M|t-s|^{1/2}+\frac{4\|\varphi\|_{\infty}M}{\lambda}|x-y|,
\]
and
\[
\bigl||x|^{2}-|y|^{2}\bigr|
=\bigl||x|-|y|\bigr|\,(|x|+|y|)
\le \lambda |x-y|,
\]
again because $|x|,|y|\le K_{0}$. Altogether,
\[
|f(t,x)-f(s,y)|
\le Q\bigl(|t-s|^{1/2}+|x-y|\bigr),
\]
for some constant $Q$ depending only on $\varphi$ and $\lambda$, so $f$ is of type $\mathrm{Lip}(1,1/2)$ on $U$.

Next we verify the nondegeneracy condition in Theorem~\ref{thm:IFT-Lip}.  But also, for $y_1,y_2\in I$,
\begin{align*}
|f(t,x',y_1)-f(t,x',y_2)|
&=
\left|
\varphi^{2}\!\left(t,\frac{x'}{|(x',y_1)|},\frac{y_1}{|(x',y_1)|}\right)
-\varphi^{2}\!\left(t,\frac{x'}{|(x',y_2)|},\frac{y_2}{|(x',y_2)|}\right)
-y_1^{2}+y_2^{2}
\right| \\
&\ge |y_1+y_2|\,|y_1-y_2|
-
\left|
\varphi^{2}\!\left(t,\frac{x'}{|(x',y_1)|},\frac{y_1}{|(x',y_1)|}\right)
-\varphi^{2}\!\left(t,\frac{x'}{|(x',y_2)|},\frac{y_2}{|(x',y_2)|}\right)
\right| \\
&\ge \lambda|y_1-y_2|
-2\|\varphi\|_{\infty}M
\left|
\frac{(x',y_1)}{|(x',y_1)|}
-
\frac{(x',y_2)}{|(x',y_2)|}
\right|.
\end{align*}
Now let $(x',y_1),(x',y_2)\in U$. Considering the function $g:\mathbb{R}\to S^{n-1}$
given by
\[
g(\ell)=\frac{(x',\ell)}{|(x',\ell)|},
\]
and using the mean value theorem we estimate:
\[
\left|
\frac{(x',y_1)}{|(x',y_1)|}
-
\frac{(x',y_2)}{|(x',y_2)|}
\right|
\le
\max\left\{
\sup\left\{
\left|
\frac{\partial}{\partial x_j}
\left(
\frac{(x',\ell)}{|(x',\ell)|}
\right)
\right|,\ \ell\in[y_1,y_2]
\right\},\ 1\le j\le n
\right\}
|y_1-y_2|.
\]
But, denoting by $\delta_{j,n}$ the Kronecker delta, a direct computation gives
\[
\frac{\partial}{\partial x_j}
\left(
\frac{(x',x_n)}{|(x',x_n)|}
\right)
=
\frac{\delta_{j,n}}{|(x',x_n)|}
-
\frac{x_jx_n}{|(x',x_n)|^{3}}.
\]
If $j=n$,
\[
\frac{\delta_{j,n}}{|(x',x_n)|}
-
\frac{x_jx_n}{|(x',x_n)|^{3}}
=
\frac{1}{|(x',x_n)|}
-
\frac{x_n^{2}}{|(x',x_n)|^{3}}
\to
\frac{1}{\lambda}-\frac{1}{\lambda}=0
\quad\text{as }(x',x_n)\to(0,\lambda).
\]
If $j\ne n$,
\[
\frac{\delta_{j,n}}{|(x',x_n)|}
-
\frac{x_jx_n}{|(x',x_n)|^{3}}
=
-\frac{x_jx_n}{|(x',x_n)|^{3}}
\to 0
\quad\text{as }(x',x_n)\to(0,\lambda).
\]
In any case we can then obtain, for the previously chosen $\eta>0$,
\[
\left|
\frac{(x',y_1)}{|(x',y_1)|}
-
\frac{(x',y_2)}{|(x',y_2)|}
\right|
\le \eta|y_1-y_2|.
\]
Thus, going back to the previous estimate we obtain:
\[
|f(t,x',y_1)-f(t,x',y_2)|
\ge
\lambda|y_1-y_2|-2\|\varphi\|_{\infty}M\eta|y_1-y_2|.
\]
With all of the above, and taking $\eta$ small enough so that $\lambda-2\|\varphi\|_{\infty}M\eta>0$,
we will have that $f$ satisfies the hypotheses of Theorem~\ref{thm:IFT-Lip}.
Therefore there exists an open set $V\subset\mathbb{R}^{n}$ such that $(0,0')\in V\subset B$
and a function $\psi:V\to I$ of type $\mathrm{Lip}(1,1/2)$ such that
\[
\{(t,x',y)\in V\times I:\ f(t,x',y)=0\}=\{(t,x',\psi(t,x')):\ (t,x')\in V\}.
\]
Define the cylinder $C=V\times I$. Take $\alpha\in C\cap\Sigma(\psi,V)$, so $\alpha$ has the form
\[
\alpha=(s,x',\psi(s,x')),\qquad (s,x')\in V.
\]
By definition of $f$ we have $f(\alpha)=0$, hence
\[
\varphi^{2}\!\left(s,\frac{(x',\psi(s,x'))}{|(x',\psi(s,x'))|}\right)=|(x',\psi(s,x'))|^{2},
\]
and therefore, setting
\[
q:=|(x',\psi(s,x'))|,\qquad
\sigma:=\frac{(x',\psi(s,x'))}{|(x',\psi(s,x'))|}\in S^{n-1},
\]
we get $q=\varphi(s,\sigma)$ and
\[
\alpha=(s,x',\psi(s,x'))=(s,q\sigma)\in \partial\Omega.
\]
Thus $C\cap\Sigma(\psi,V)\subset C\cap\partial\Omega$.
Conversely, let $\beta\in C\cap\partial\Omega$. Then there exist $s\in I$ and $\omega\in S^{n-1}$ such that
\[
\beta=(s,\omega\,\varphi(s,\omega)).
\]
Evaluating $f$ at $\beta$ yields
\[
f(\beta)=\varphi^{2}(s,\omega)-|\omega\,\varphi(s,\omega)|^{2}
=\varphi^{2}(s,\omega)-\varphi^{2}(s,\omega)=0.
\]
Write $\beta=(s,x',y)$ (so $(s,x')\in V$ and $y\in I$). Since $f(s,x',y)=0$ and the implicit function theorem
gives the representation of the zero set as a graph over $V$, we must have $y=\psi(s,x')$, hence
$\beta\in C\cap\Sigma(\psi,V)$. Therefore $C\cap\partial\Omega\subset C\cap\Sigma(\psi,V)$.
Consequently, $C\cap\partial\Omega=C\cap\Sigma(\psi,V)$, 
and the proof is complete.
\end{proof}

\section*{Acknowledgements}

The first author began this work as an undergraduate student in the Mathematics program at the University of Sonora, during a research stay as part of the XX Summer Program for Scientific and Technological Research of the Pacific (2015), hosted by the second author at the Autonomous University of the State of Morelos. The authors are grateful to the University of Sonora for its support, and they also thank Martha Guzm\'an Partida for facilitating this collaboration.

\end{document}